\newtheorem{theorem}{Theorem}
\newtheorem{remark}{Remark}
\newtheorem{proposition}{Proposition}
\newcommand{\numroads}{n}
\newcommand{\setroads}{{[}\numroads{]}}
\newcommand{\idxroad}{i}
\newcommand{\idxroadalt}{{i'}}
\newcommand{\idxmixedrd}{m}
\newcommand{\numhumroads}{n^\text{h}}
\newcommand{\sethumroads}{{[}\numhumroads{]}}
\newcommand{\numautroads}{n^\text{a}}
\newcommand{\setautroads}{{[}\numautroads{]}}
\newcommand{\flowhum}{f^\text{h}}
\newcommand{\flowaut}{f^\text{a}}
\newcommand{\flowopt}{f^*}
\newcommand{\flowhumopt}{f^\text{*h}}
\newcommand{\flowautopt}{f^\text{*a}}
\newcommand{\alteqflow}{\tilde{f}}
\newcommand{\alteqhumflow}{\tilde{f}^\text{h}}
\newcommand{\alteqautflow}{\tilde{f}^\text{a}}
\newcommand{\alteqhumflowtwo}{\tilde{g}^\text{h}}
\newcommand{\altopthumflowtwo}{g^\text{*h}}
\newcommand{\demand}{\bar{f}}
\newcommand{\demandhum}{\bar{f}^\text{h}}
\newcommand{\demandaut}{\bar{f}^\text{a}}
\newcommand{\autlev}{\alpha}
\newcommand{\tollhum}{\tau^\text{h}}
\newcommand{\tollaut}{\tau^\text{a}}
\newcommand{\largenum}{P}
\newcommand{\capacity}{q}
\newcommand{\caphum}{m}
\newcommand{\capaut}{M}
\newcommand{\rdlen}{d}
\newcommand{\latency}{\ell}
\newcommand{\fflat}{t}
\newcommand{\bprparam}{\rho}	
\newcommand{\bprexpo}{\sigma}
\newcommand{\scaleparam}{a}
\newcommand{\asymmetry}{k}
\newcommand{\costhum}{c^\text{h}}
\newcommand{\costaut}{c^\text{a}}
\newcommand{\eqcosthum}{\hat{c}^\text{h}}
\newcommand{\eqcostaut}{\hat{c}^\text{a}}
\newcommand{\socialcost}{C}
\title{\LARGE \bf
Optimal Tolling \\ for Heterogeneous Traffic Networks  with Mixed Autonomy
}
\author{Daniel A. Lazar$^{1}$, Samuel Coogan$^{2}$, and Ramtin Pedarsani$^{1}$
	\thanks{$^{1}$Department of Electrical and Computer Engineering, 
		University of California, Santa Barbara}%
	\thanks{$^{2}$School of Electrical and Computer Engineering and School of Civil and Environmental Engineering, Georgia Institute of Technology}%
	\thanks{{\tt\small dlazar@ece.ucsb.edu, sam.coogan@gatech.edu, ramtin@ece.ucsb.edu}}
}
\begin{document}

\maketitle
\thispagestyle{empty}
\pagestyle{empty}

\begin{abstract}

When people pick routes to minimize their travel time, the total experienced delay, or social cost, may be significantly greater than if people followed routes assigned to them by a social planner. This effect is accentuated when human drivers share roads with autonomous vehicles. When routed optimally, autonomous vehicles can make traffic networks more efficient, but when acting selfishly, the introduction of autonomous vehicles can actually \emph{worsen} congestion. We seek to mitigate this effect by influencing routing choices via tolling. We consider a network of parallel roads with affine latency functions that are \emph{heterogeneous}, meaning that the increase in capacity due to to the presence of autonomous vehicles may vary from road to road. We show that if human drivers and autonomous users have the same tolls, the social cost may be arbitrarily worse than when optimally routed. We then prove qualities of the optimal routing and use them to design tolls that are guaranteed to minimize social cost at equilibrium. To the best of our knowledge, this is the first tolling scheme that yields a unique socially optimal equilibrium for parallel heterogeneous network with affine latency functions.

\end{abstract}

\section{INTRODUCTION}
\label{sct:intro}
Road congestion in the United States alone costs billions of dollars in wasted time and fuel, even neglecting the associated environmental degradation and negative health impacts \cite{schrank2015}. Moreover, this cost is projected to only increase. One potential way to mitigate this is through the introduction of autonomous vehicles, which can increase throughput on urban roads by a factor of two or three \cite{lioris2017platoons}. However, for the foreseeable future, autonomous vehicles will share roads with human drivers, rendering the mobility benefits less clear. 

Moreover, it is important to consider that human drivers, and likely the users of autonomous vehicles, will make selfish decisions -- they will pick routes that minimizes their travel time without considering the effect of their choice on the overall traffic congestion. This leads to a gap between the overall traffic delay in selfish equilibria and the minimum possible overall delay, which would occur if vehicles followed directions from some benevolent social planner \cite{pigou1932economics}. The gap between these costs can be bounded with a bound that depends on the relationship between road delay and the flow of vehicles on that road. However, this gap can be much greater in networks with mixed autonomy than in networks with a single vehicle type, and the gap may even be unbounded \cite{lazar2018poa}.

In a similar phenomenon, converting some fraction of vehicles to be autonomous can make equilibria \emph{worse}, even though the presence of these vehicles \emph{increases} the capacity of roads \cite{mehr2018can}. This effect is related to Braess' Paradox, in which adding a road to a road network may increase the equilibrium delay. These phenomena further complicate how congestion in traffic networks will change as autonomous vehicle manufacturers release their cars onto public roads. 

To address this, we wish to somehow influence how people choose their routes, and a natural mechanism to consider is tolling. Beckmann \emph{et al.} \cite{beckmann1956studies} found optimal tolls when there is a single type of car on the road and others extend this to when there are multiple vehicle types \cite{dafermos1972multiclass_user}. However, the setting of mixed autonomy violates key assumptions made in these classic works; consequently, optimality is no longer guaranteed. Because of this, we investigate how to guarantee socially optimal routing via tolling. We consider the case of parallel roads and, motivated by previously established capacity and delay models, we consider road delay functions that are affine with respect to the flow of each type of vehicle. We establish theoretical results about the optimal routing and use these results to design an optimal tolling scheme.

Our contributions are as follows:
\begin{itemize}
	\item We bound the increase in the overall latency when autonomous vehicles are introduced to road networks,
	\item we show that optimal undifferentiated tolls yield equilibria that can be arbitrarily worse than the social optima, and
	\item we prove qualities of the optimal routing for vehicles on shared roads and use these results to design tolls that yield a unique, socially optimal equilibrium. \\
\end{itemize}

\noindent \textbf{Previous Work. } Many works have shown how autonomous vehicles can decrease congestion, either via platooning \cite{lioris2017platoons,askari2017effect}, by dissipating shockwaves in congested vehicle flow \cite{stern2018dissipation, wu2018stabilizing}, or by managing merging vehicles at bottlenecks \cite{li2018modeling,jang2018simulation}. Other works characterize the capacity of a road as a function of the fraction of vehicles on a road that are autonomous \cite{lazar2017routing, askari2017effect}. This allows the formulation of a congestion game with mixed autonomy \cite{lazar2018routing}. Many works have studied aspects of congestion games, such as how to optimally route vehicles of differing types \cite{dafermos1969traffic_general}, as well as bound the Price of Anarchy (PoA), the maximum ratio between total latency under selfish routing to that under optimal routing \cite{koutsoupias:1999fs,roughgarden2002bad,perakis2007price, correa2008geometric}. The PoA has been bounded in the case of mixed autonomy as well \cite{lazar2018poa, lazar2018routing}. Properties of selfish routing were first introduced by Wardrop \cite{wardrop1900some} and have been expanded in many subsequent works \cite{smith1979existence, depalma1998optimization, correa2011wardrop}.

In the case of a single vehicle type, \emph{marginal cost tolls} yield an essentially unique equilibrium that minimizes total latency \cite{beckmann1956studies}. Under certain conditions, the same can be said when there are multiple vehicle types on a road \cite{dafermos1972multiclass_user}. However, these conditions are violated when the two vehicle types are capable of maintaining different headways. Specifically, \cite{dafermos1972multiclass_user} assumes that the Jacobians of the latency functions are positive definite; the latency function derived in \eqref{eq:latency} in Section~\ref{sct:model} violate this assumption.

This is addressed in \cite{mehr2019pricing}; there the authors show by example that in multicommodity networks (networks with multiple source-destination pairs), if autonomous vehicles and human drivers experience the same tolls, it is not always possible to create an equilibrium that minimizes social cost. However, they have positive results for a \emph{homogeneous} network; a network in which all roads see the same multiplicative increase in capacity from the presence of autonomous vehicles. In this case, they show that through proper tolling, all equilibria will minimize social cost. The tolling used to achieve this is \emph{marginal cost tolling}, in which each user pays the value of the marginal increase in delay they cause for all others who would be using that road in the optimal routing. This means that human drivers and autonomous users pay different amounts for traveling on a road, since human drivers will generally contribute more to congestion. 

We are interested in the case in which the network is not homogeneous, and different roads will see differing benefits from autonomy. This is relevant as platooning may yield greater capacity increases on highways than on urban roads. For heterogeneous networks and affine cost function, we develop the first tolling scheme that provably achieves a unique socially optimal equilibrium. 

On a broader level, we are motivated by the results of \cite{mehr2018can}, which show that converting some demand from human-driven vehicles to autonomous vehicles can counterintuitively worsen aggregate delay in equilibrium. In \cite{mehr2019will} the authors bound the effect of this phenomena in homogeneous networks to be no greater than the PoA for the class of cost functions with a single vehicle type. We extend this to the heterogeneous case in Section~\ref{sct:tolling}. Before doing so, we describe the model for our network and the relationship between vehicle flow and travel delay on the roads.

\section{MODEL}
\label{sct:model}

In this section we specify the structure of the road network, road latency functions, and the tolls considered, as well as characteristics of equilibria. We consider $\numroads$ parallel roads and use $\setroads = \{ 1,2,\ldots,n\}$ to denote the set of roads. We use $\flowhum_\idxroad$ and $\flowaut_\idxroad$ respectively to denote the human-driven and autonomous flow on road $\idxroad$; the vectors $\flowhum,\flowaut \in \mathbb{R}^\numroads_{\ge 0}$ denote the flow on all roads. We use the term \emph{routing} to refer to a flow pair $(\flowhum,\flowaut)$.

We consider nonatomic flow, meaning each user controls an infinitesimally small unit of the flow, and does not individually change the travel delay on a road. We use $\demandhum$ and $\demandaut$ to denote the flow demand of human-driven and autonomous vehicles, respectively. This demand is considered nonelastic, meaning that the demand is constant and independent of the road latencies.

Each road has an associated \emph{delay function} or \emph{latency function}, $\latency_\idxroad(\flowhum_\idxroad,\flowaut_\idxroad):\mathbb{R}^2_{\ge 0} \rightarrow \mathbb{R}_{\ge 0}$. A traveler who chooses a road experiences a cost that is the sum of the latency and the toll they pay to travel on the road; we assume that all users have the same sensitivity to tolls. Human drivers and autonomous users may have different tolls, respectively denoted $\tollhum_\idxroad$ and $\tollaut_\idxroad$. The \emph{experienced costs} for the vehicle types are then
\begin{align*}
	\costhum_\idxroad(\flowhum_\idxroad,\flowaut_\idxroad) &= \latency_\idxroad(\flowhum_\idxroad,\flowaut_\idxroad) + \tollhum_\idxroad \; , \\
	\costaut_\idxroad(\flowhum_\idxroad,\flowaut_\idxroad) &= \latency_\idxroad(\flowhum_\idxroad,\flowaut_\idxroad) + \tollaut_\idxroad \; .
\end{align*}

The \emph{social cost} is the total negative consequence of a given traffic pattern. We consider tolls that are recirculated back into the public coffers, so the only harm incurred to society is the latency experienced by the network users. The social cost is then as follows:
\begin{equation}\label{eq:socialcost}
	\socialcost(\flowhum,\flowaut) = \sum_{\idxroad \in \setroads}(\flowhum_\idxroad+\flowaut_\idxroad)\latency_\idxroad(\flowhum_\idxroad,\flowaut_\idxroad) \; .
\end{equation}

A road's latency function depends on the capacity of the road, which in turn depends on the autonomy level $\autlev_\idxroad$, the fraction of vehicles on road $\idxroad$ that are autonomous. The road capacity is the maximum vehicle flow that can travel on that road. Let autonomous vehicles occupy $\capaut^{-1}_\idxroad$ meters, including headway, when traveling at nominal velocity on road $\idxroad$, and let human-driven vehicles occupy $\caphum^{-1}_\idxroad$ meters. We denote the road length as $\rdlen_\idxroad$ and the free-flow velocity as $v_\idxroad$. Then, as in \cite{lazar2017routing, askari2017effect}, the road capacity is as follows\footnote{This model assumes either that autonomous vehicles do not rely on vehicle-to-vehicle communication and maintain the same headways behind the vehicle that they follow regardless of vehicle types, or that the autonomous vehicles can rearrange themselves to form one large platoon, as in \cite{lazar2018maximizing}.}.

\begin{equation*}
	\capacity_\idxroad(\autlev_\idxroad) = v_\idxroad \rdlen_\idxroad/(\autlev_\idxroad \capaut^{-1}_\idxroad + (1-\autlev_\idxroad)\caphum^{-1}_\idxroad) \; .
\end{equation*}

We use this in conjunction with the well known Bureau of Public Roads road latency model \cite{bureau1964manual,branston1976link}, as in \cite{lazar2018routing,mehr2018can}. This yields the following road latency function.
\begin{align}\label{eq:latencygeneral}
	\latency_\idxroad(\flowhum_\idxroad,\flowaut_\idxroad) = \fflat_\idxroad\left(1+\bprparam_\idxroad\left(\frac{\flowhum_\idxroad}{\caphum_\idxroad} + \frac{\flowaut_\idxroad}{\capaut_\idxroad}\right)^{\bprexpo_\idxroad}\right) \; .
\end{align}

In this paper, except for when bounding the \emph{Price of Autonomy}, we consider $\bprexpo_\idxroad = 1$ $\forall \idxroad \in \setroads$. Then we combine the parameters, with $\scaleparam_\idxroad = \fflat_\idxroad \bprparam_\idxroad/\capaut_\idxroad$ and $\asymmetry_\idxroad = \capaut_\idxroad/\caphum_\idxroad$. The latency function is then affine with respect to the vehicle flows,
\begin{equation}\label{eq:latency}
	\latency_\idxroad(\flowhum_\idxroad,\flowaut_\idxroad) = \asymmetry_\idxroad \scaleparam_\idxroad \flowhum_\idxroad + \scaleparam_\idxroad \flowaut_\idxroad + \fflat_\idxroad \; .
\end{equation}

\noindent \textbf{Equilibria. } We are concerned with characterizing Wardrop Equilibria of a traffic network, meaning situations in which no user has incentive to change their strategy \cite{wardrop1900some}. We treat equilibria as reasonable predictions of user behavior. 

We consider both human drivers and autonomous users to be selfish. This means that in Wardrop Equilibrium, if there is positive human-driven flow on road $\idxroad$, this implies that 
$$
\costhum_\idxroad(\flowhum_\idxroad,\flowaut_\idxroad) \le \costhum_\idxroadalt(\flowhum_\idxroadalt,\flowaut_\idxroadalt), ~~\forall \idxroadalt \in \setroads,
$$ 
and similarly, for autonomous flow, $\flowaut_\idxroad > 0$ implies 
$$
\costaut_\idxroad(\flowhum_\idxroad,\flowaut_\idxroad) \le \costaut_\idxroadalt(\flowhum_\idxroadalt,\flowaut_\idxroadalt), ~~\forall \idxroadalt \in \setroads.
$$ 
Since we consider parallel roads, all users of the same type will experience the same cost.

As mentioned earlier, equilibria can often incur far greater social cost than the optimal routing. Because of this, our goal is to design tolls such that the only equilibrium that exists minimizes the social cost.

\section{EFFICIENCY AND TOLLING}
\label{sct:tolling}
To begin our discussion of tolling, we expand previous results to bound how much worse equilibria can be when some vehicles are autonomous as compared to when all vehicles are human driven. Following this, we show that tolling cannot help in any significant way when we are forced to toll humans and autonomous vehicles identically. We then develop properties of optimal routing and provide a method for calculating optimal tolls. \\

\noindent \textbf{Bounding the Price of Autonomy. }As mentioned earlier, \cite{mehr2019will} shows that converting some vehicles to be autonomous may worsen the aggregate delay in equilibrium. This work bounds this effect when a network is \emph{homogeneous}, meaning that there the multiplicative increase in capacity due to autonomy is uniform on all roads, \emph{i.e.} $\capaut_\idxroad/\caphum_\idxroad = \capaut_\idxroadalt/\caphum_\idxroadalt$ $\forall \idxroad, \idxroadalt \in \setroads$. In particular, \cite{mehr2019will} shows that the delay will not increase by a factor more than the Price of Anarchy for that class of cost functions with a single vehicle type. However, if the roads have varying characteristics such as speed limits and separation from pedestrian traffic, the network will not be homogeneous and this bound will not hold. Motivated by this, we bound this quantity for general (not necessarily parallel) heterogeneous networks.

First we define the maximum degree of asymmetry
\begin{equation*}
	\asymmetry = \max_{\idxroad \in \setroads} \capaut_\idxroad/\caphum_\idxroad
\end{equation*}
and the maximum polynomial degree
\begin{equation*}
	\bprexpo = \max_{\idxroad \in \setroads}\bprexpo_\idxroad \;  .
\end{equation*}
Let 
\begin{equation*}
	\xi(\bprexpo) = \bprexpo(\bprexpo+1)^{-(\bprexpo+1)/ \bprexpo} \;  .
\end{equation*}
\begin{proposition}
	Consider a general network with social cost as defined in \eqref{eq:socialcost} and road latency functions of the form \eqref{eq:latencygeneral}, and let $\capaut_\idxroad/\caphum_\idxroad\ge 1$ $\forall \idxroad \in \setroads$. Let $(\alteqhumflowtwo,0)$ be a Wardrop Equilibrium routing with human flow demand $\demand$ and zero autonomous flow demand. Let $(\alteqhumflow,\alteqautflow)$ be an equilibrium routing with human flow demand $\demandhum$ and autonomous flow demand $\demand - \demandhum$, where $\demandhum \in {[}0,\demand{]}$. Then,
	\begin{align*}
	\socialcost(\alteqhumflow,\alteqautflow) \le \frac{\asymmetry^\bprexpo}{1-\xi(\bprexpo)}\socialcost(\alteqhumflowtwo,0)  \; .
	\end{align*}
\end{proposition}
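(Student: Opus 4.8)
The plan is to adapt the variational-inequality and ``smoothness'' machinery behind Price-of-Anarchy bounds to this equilibrium-versus-equilibrium comparison. The one structural fact that makes this work is that both routings carry the \emph{same} total demand $\demand$, so the all-human routing (reinterpreted as a two-class routing) is an admissible deviation profile against which the mixed equilibrium can be tested. Throughout I would compare the true two-capacity latency against a single-class \emph{surrogate} on each road, namely the latency the same total flow would produce were every vehicle human, $\fflat_\idxroad\bigl(1+\bprparam_\idxroad((\flowhum_\idxroad+\flowaut_\idxroad)/\caphum_\idxroad)^{\bprexpo_\idxroad}\bigr)$; note that $\alteqhumflowtwo$ is a genuine Wardrop equilibrium of the single-class network defined by these surrogate latencies, and its surrogate social cost is exactly $\socialcost(\alteqhumflowtwo,0)$.

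First I would write the Wardrop condition for the mixed equilibrium in variational form. Since, absent tolls, both classes experience the common latency $\latency_\idxroad$, for every feasible routing $(\flowhum,\flowaut)$ realizing the demands $(\demandhum,\demand-\demandhum)$,
\[
\sum_{\idxroad\in\setroads}\latency_\idxroad(\alteqhumflow_\idxroad,\alteqautflow_\idxroad)\bigl[(\flowhum_\idxroad+\flowaut_\idxroad)-(\alteqhumflow_\idxroad+\alteqautflow_\idxroad)\bigr]\ge 0 .
\]
I would then manufacture such a routing from $\alteqhumflowtwo$ by dividing its flow between the two classes in the fixed ratio $\demandhum:(\demand-\demandhum)$ on every commodity; this is feasible and leaves the total edge flow equal to $\alteqhumflowtwo_\idxroad$. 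Substituting yields
\[
\socialcost(\alteqhumflow,\alteqautflow)\le \sum_{\idxroad\in\setroads}\latency_\idxroad(\alteqhumflow_\idxroad,\alteqautflow_\idxroad)\,\alteqhumflowtwo_\idxroad .
\]

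Next I would pass to the surrogate. Because $\capaut_\idxroad\ge\caphum_\idxroad$, the true normalized flow $\flowhum_\idxroad/\caphum_\idxroad+\flowaut_\idxroad/\capaut_\idxroad$ and its all-human counterpart $(\flowhum_\idxroad+\flowaut_\idxroad)/\caphum_\idxroad$ differ by a factor in $[1,\capaut_\idxroad/\caphum_\idxroad]\subseteq[1,\asymmetry]$; raising to the power $\bprexpo_\idxroad\le\bprexpo$ shows that the true latency and its surrogate agree up to the multiplicative factor $\asymmetry^{\bprexpo}$, and this is precisely where the asymmetry constant enters. With the comparison in hand I would invoke the single-class smoothness inequality for the class of degree-$\le\bprexpo$ polynomial latencies, whose tight anarchy value is $\xi(\bprexpo)=\bprexpo(\bprexpo+1)^{-(\bprexpo+1)/\bprexpo}$, attained by the monomial $x^{\bprexpo}$ (which I would verify by maximizing $u-u^{\bprexpo+1}$ in one variable): for such a latency $L$ and all $x,y\ge 0$, $\,yL(x)\le yL(y)+\xi(\bprexpo)\,xL(x)$. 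Applying this on each road to the surrogate, identifying the resulting comparison sum with $\socialcost(\alteqhumflowtwo,0)$, absorbing the self-term back into $\socialcost(\alteqhumflow,\alteqautflow)$ via the same capacity bound, and rearranging, collects the two losses into a bound of the claimed form $\asymmetry^{\bprexpo}/(1-\xi(\bprexpo))$.

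The main obstacle, and the reason the textbook PoA argument does not transfer verbatim, is that $\latency_\idxroad$ is not a function of a single conserved scalar: the two classes enter through different capacities, so there is no single-commodity flow to which smoothness applies directly. The delicate bookkeeping is therefore the surrogate comparison of the preceding paragraph -- one must thread the factor $\asymmetry^{\bprexpo}$ so that it scales $\socialcost(\alteqhumflowtwo,0)$ while the anarchy value $\xi(\bprexpo)$ alone governs the fixed-point denominator; I would manage this by normalizing every flow by the human capacities $\caphum_\idxroad$ before splitting off the self-term. A secondary point requiring care is the feasibility of the proportional split in a general multicommodity network, which I would justify commodity by commodity so that the per-edge totals still coincide with $\alteqhumflowtwo_\idxroad$.
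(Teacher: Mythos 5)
Your proposal is correct in substance, but it takes a genuinely different route from the paper, whose proof is a three-line reduction through \emph{optima}: it cites the mixed-autonomy Price-of-Anarchy bound of \cite{lazar2018routing} to get $\socialcost(\alteqhumflow,\alteqautflow)\le \frac{\asymmetry^\bprexpo}{1-\xi(\bprexpo)}\socialcost(\flowhumopt,\flowautopt)$, then observes $\socialcost(\flowhumopt,\flowautopt)\le\socialcost(\altopthumflowtwo,0)\le\socialcost(\alteqhumflowtwo,0)$ --- the first inequality because, with $\capaut_\idxroad\ge\caphum_\idxroad$, routing the mixed demand along the all-human optimal pattern only lowers each latency, the second by optimality of $\altopthumflowtwo$. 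Your self-contained variational-inequality-plus-smoothness argument avoids the citation, with one caveat: your middle paragraph, as literally written, does not close. Bounding the surrogate self-term by $\asymmetry^\bprexpo$ times the true cost gives the fixed point $\socialcost(\alteqhumflow,\alteqautflow)\le(1-\asymmetry^\bprexpo\xi(\bprexpo))^{-1}\socialcost(\alteqhumflowtwo,0)$, which is not the claimed form and is vacuous when $\asymmetry^\bprexpo\xi(\bprexpo)\ge1$. The fix you yourself flag in your final paragraph --- applying smoothness in the $\caphum_\idxroad$-normalized variable --- is exactly right, and in fact over-delivers: writing $w_\idxroad=\flowhum_\idxroad/\caphum_\idxroad+\flowaut_\idxroad/\capaut_\idxroad$ and $L_\idxroad(w)=\fflat_\idxroad(1+\bprparam_\idxroad w^{\bprexpo_\idxroad})$, and taking $x=\tilde w_\idxroad$, $y=\alteqhumflowtwo_\idxroad/\caphum_\idxroad$ in $yL_\idxroad(x)\le yL_\idxroad(y)+\xi(\bprexpo)\,xL_\idxroad(x)$, then multiplying by $\caphum_\idxroad$ and using $\caphum_\idxroad\tilde w_\idxroad=\alteqhumflow_\idxroad+(\caphum_\idxroad/\capaut_\idxroad)\alteqautflow_\idxroad\le\alteqhumflow_\idxroad+\alteqautflow_\idxroad$ (this is precisely where $\capaut_\idxroad\ge\caphum_\idxroad$ enters) yields, road by road,
\begin{equation*}
\alteqhumflowtwo_\idxroad\,\latency_\idxroad(\alteqhumflow_\idxroad,\alteqautflow_\idxroad)\le \alteqhumflowtwo_\idxroad\,\latency_\idxroad(\alteqhumflowtwo_\idxroad,0)+\xi(\bprexpo)\,(\alteqhumflow_\idxroad+\alteqautflow_\idxroad)\,\latency_\idxroad(\alteqhumflow_\idxroad,\alteqautflow_\idxroad)\;.
\end{equation*}
Summing and combining with your variational inequality gives $\socialcost(\alteqhumflow,\alteqautflow)\le\frac{1}{1-\xi(\bprexpo)}\socialcost(\alteqhumflowtwo,0)$ with no $\asymmetry^\bprexpo$ at all (note $\xi(\bprexpo)<1$ always, so the rearrangement is safe); the stated bound then follows a fortiori since $\asymmetry\ge1$. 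So your worry about ``threading'' $\asymmetry^\bprexpo$ onto $\socialcost(\alteqhumflowtwo,0)$ is moot: done with the $\caphum$-normalization, the surrogate and its $\asymmetry^\bprexpo$ loss never appear, and you recover the homogeneous-network bound of \cite{mehr2019will} even in the heterogeneous case --- strictly sharper than the proposition, whose $\asymmetry^\bprexpo$ factor is inherited from the PoA bound the paper's reduction passes through. What each approach buys: the paper's proof is short and leans on known results; yours is longer but self-contained, extends to multicommodity networks with the per-commodity proportional split you describe (valid because both classes face the common untolled latency and the per-commodity totals agree), and, executed correctly, proves a stronger statement.
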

\begin{proof}
	Let $(\altopthumflowtwo,0)$ denote the socially optimal routing for human flow demand $\demand$ and zero autonomous demand, and let $(\flowhumopt,\flowautopt)$ be a socially optimal routing for human flow demand $\demandhum$ and autonomous flow demand $\demand - \demandhum$, where $\demandhum \in {[}0,\demand{]}$.

	The results in \cite{lazar2018routing} imply that
	\begin{align*}
		\socialcost(\alteqhumflow,\alteqautflow) \le \frac{\asymmetry^\bprexpo}{1-\xi(\bprexpo)}\socialcost(\flowhumopt,\flowautopt) \; .
	\end{align*}
	
	Then,
	\begin{align*}
		\socialcost(\flowhumopt,\flowautopt) &\le \socialcost(\altopthumflowtwo,0) \\
		& \le \socialcost(\alteqhumflowtwo,0) \; ,
	\end{align*}
	due to the assumption that $\capaut_\idxroad/\caphum_\idxroad\ge 1$ $\forall \idxroad \in \setroads$ and the definition of optimal flow.
	
	Together, these imply the proposition.
\end{proof}

Note that the assumption that $\capaut_\idxroad/\caphum_\idxroad\ge 1$ $\forall \idxroad \in \setroads$ is required for this proposition, but is not required for the subsequent theoretical results.

This bound for the heterogeneous case is equal to the Price of Anarchy bound for mixed autonomy in \cite{lazar2018routing}, Theorem~1. Though the increase in inefficiency is bounded with respect to $\asymmetry$ and $\bprexpo$, it grows with these parameters. With this motivation, we look to mitigate this inefficiency through tolling. \\

\noindent \textbf{Undifferentiated tolls. }First we consider whether we can enforce optimal routing with undifferentiated tolls, meaning a tolling scheme in which human drivers and autonomous vehicles pay the same toll for a road. Can such a tolling scheme yield a unique socially-optimal equilibrium?

Previous work has answered this question negatively -- in \cite{mehr2019pricing}, the authors show an example of a network with multiple source destination pairs in which undifferentiated tolls fail to minimize social cost in equilibrium. In this section we extend these results and show a simple two-road network in which undifferentiated tolls fail. We further show that the best undifferentiated tolling scheme in this network yields an equilibrium with social cost that can be arbitrarily worse than the social cost under optimal routing.

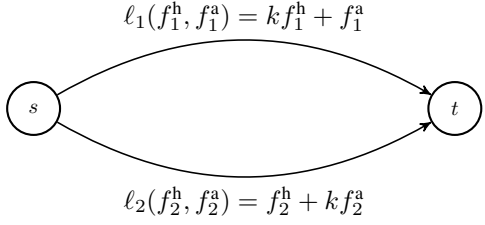
\begin{figure}
	\centering
	\begin{tikzpicture}[->, >=stealth', auto, semithick, node distance=7cm]
	\tikzstyle{every state}=[fill=white,draw=black,thick,text=black,scale=0.8]
	\node[state]    (0)               {$s$};
	\node[state]    (1)[right of=0]   {$t$};
	\path
	(0) edge[bend left]		node{$\latency_1(\flowhum_1, \flowaut_1) = \asymmetry \flowhum_1 + \flowaut_1$}     (1)
	(0) edge[bend right]	node[below]{$\latency_2(\flowhum_2,\flowaut_2) = \flowhum_2 + \asymmetry \flowaut_2$}     (1);
	\end{tikzpicture}
	\caption{Example of the futility of undifferentiated tolling in a simple network. Consider one unit of flow demand for each human-driven and autonomous vehicles -- equilibrium under the best undifferentiated toll may be arbitrarily worse than the socially optimal routing.}
	\label{fig:undiffentiatedtolls}
\end{figure}

Consider the network in Fig.~\ref{fig:undiffentiatedtolls}, with one unit of human-driven flow demand and one unit of autonomous flow demand. Let $\asymmetry \ge 1$. The socially-optimal routing has social cost $2$, with all autonomous flow on road $1$ and all human-driven flow on road $2$. The worst-case equilibrium has this routing reversed for a social cost of $2\asymmetry$.

Without loss of generality, we can consider a toll on just one of the roads, since only the difference between the tolls on the two roads will affect the equilibrium. This example is symmetric, so without loss of generality let the top road be the road with a positive toll. In the resulting worst-case equilibrium, the top road has some of the human-driven flow and the bottom road has the remainder of the human-driven flow and all the autonomous flow. To investigate how well the best toll can do, we derive the following.
\begin{align*}
	&\min_{\flowhum_1 \in {[}0,1{]}} \flowhum_1\latency_1(\flowhum_1,0) + (1-\flowhum_1+1)\latency_2(1-\flowhum_1,1) \\ 
	& = \min_{\flowhum_1 \in {[}0,1{]}} \asymmetry (\flowhum_1)^2 + (1-\flowhum_1+1)(1-\flowhum_1+\asymmetry) \\
	&= \frac{7\asymmetry + 3}{4}-\frac{1}{\asymmetry+1} < 2\asymmetry \; .
\end{align*}
We see that the toll decreases our cost from that of the worst-case equilibrium. However, the worst-case equilibrium cost increases linearly with $\asymmetry$, as does the worst-case equilibrium cost when using optimal undifferentiated tolls, while the socially optimal cost is constant. Therefore, optimal undifferentiated tolling can yield unboundedly worse social cost than the socially optimal routing in mixed autonomy, even in a two-road network with affine latency functions. Because of this, we turn our attention to the case in which we can leverage different tolls to the two vehicle types. \\

\noindent \textbf{Differentiated tolls. }What if we allow different tolls based on the type of vehicle traveling on a road? In order to develop this, we first present a result regarding the optimal routing of our vehicle flows, which will be useful in considering how to levy tolls

\begin{theorem}\label{thm:separation}
	Consider a network of parallel roads with latency functions of the form \eqref{eq:latency} and assume that $\asymmetry_\idxroad=1$ for at most one road. Then, any routing which minimizes social cost will have at most one road shared between human-driven and autonomous vehicles. 
\end{theorem}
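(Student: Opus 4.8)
The plan is to argue by contradiction. Suppose a social-cost-minimizing routing $(\flowhum,\flowaut)$ shares two distinct roads, say $1$ and $2$, so that $\flowhum_1,\flowaut_1,\flowhum_2,\flowaut_2 > 0$. Writing the per-road summand of \eqref{eq:socialcost} with the affine latency \eqref{eq:latency}, each road contributes a quantity I denote $\socialcost_\idxroad$,
\begin{equation*}
	\socialcost_\idxroad(\flowhum_\idxroad,\flowaut_\idxroad) = \scaleparam_\idxroad\asymmetry_\idxroad \flowhum_\idxroad^2 + \scaleparam_\idxroad \flowaut_\idxroad^2 + \scaleparam_\idxroad(1+\asymmetry_\idxroad)\flowhum_\idxroad\flowaut_\idxroad + \fflat_\idxroad(\flowhum_\idxroad + \flowaut_\idxroad).
\end{equation*}
A useful preliminary observation is that if the total road flow $\flowhum_\idxroad + \flowaut_\idxroad$ is held fixed, the $\flowhum_\idxroad^2$ terms cancel and $\socialcost_\idxroad$ becomes affine in $\flowhum_\idxroad$, which already suggests the human split should sit at a boundary of its feasible interval on all but one road. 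Rather than pursue this vertex intuition directly (the objective is \emph{not} jointly convex, since each $\socialcost_\idxroad$ has indefinite Hessian when $\asymmetry_\idxroad \ne 1$), I would extract the conclusion from the first- and second-order necessary conditions of optimality restricted to a two-dimensional slice.

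Concretely, I would consider demand-preserving perturbations supported on roads $1$ and $2$, namely $(\flowhum_1+x,\ \flowaut_1+y,\ \flowhum_2-x,\ \flowaut_2-y)$, and define $\Phi(x,y) = \socialcost_1(\flowhum_1+x,\flowaut_1+y) + \socialcost_2(\flowhum_2-x,\flowaut_2-y)$. Because all four base flows are strictly positive, the origin is an \emph{interior} minimizer of $\Phi$, so $\nabla\Phi(0,0) = 0$ and $\nabla^2\Phi(0,0)$ is positive semidefinite. The first-order conditions read $\partial_{\flowhum_1}\socialcost_1 = \partial_{\flowhum_2}\socialcost_2$ and $\partial_{\flowaut_1}\socialcost_1 = \partial_{\flowaut_2}\socialcost_2$; subtracting them and using the identity $\partial_{\flowhum_\idxroad}\socialcost_\idxroad - \partial_{\flowaut_\idxroad}\socialcost_\idxroad = \scaleparam_\idxroad(\asymmetry_\idxroad-1)(\flowhum_\idxroad+\flowaut_\idxroad)$ yields the ``tie'' condition $\scaleparam_1(\asymmetry_1-1)(\flowhum_1+\flowaut_1) = \scaleparam_2(\asymmetry_2-1)(\flowhum_2+\flowaut_2)$.

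For the second-order step I would compute $\nabla^2\Phi(0,0) = H_1 + H_2$, where $H_\idxroad$ is the (constant) Hessian of $\socialcost_\idxroad$. Writing $A = \scaleparam_1\asymmetry_1 + \scaleparam_2\asymmetry_2$ and $B = \scaleparam_1 + \scaleparam_2$, this sum equals $\bigl(\begin{smallmatrix} 2A & A+B \\ A+B & 2B\end{smallmatrix}\bigr)$, whose determinant is $4AB - (A+B)^2 = -(A-B)^2 \le 0$. Positive semidefiniteness therefore forces $A = B$, i.e. $\scaleparam_1(\asymmetry_1-1) + \scaleparam_2(\asymmetry_2-1) = 0$. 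Combining this with the tie condition and the facts that $\scaleparam_\idxroad > 0$ (since $\scaleparam_\idxroad = \fflat_\idxroad\bprparam_\idxroad/\capaut_\idxroad$) and $\flowhum_\idxroad + \flowaut_\idxroad > 0$ on a shared road forces $\scaleparam_1(\asymmetry_1-1) = \scaleparam_2(\asymmetry_2-1) = 0$, hence $\asymmetry_1 = \asymmetry_2 = 1$ --- contradicting the hypothesis that at most one road has $\asymmetry_\idxroad = 1$. Since any two shared roads produce this contradiction, at most one road can be shared.

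I expect the main obstacle to be precisely the non-convexity of the objective: a pure exchange or complementary-slackness argument (shifting human flow between the two roads while holding each road's total flow fixed) delivers only the first-order tie condition, which by itself does \emph{not} preclude two shared roads. The essential ingredient is the second-order condition, and the crux is verifying that the negative-semidefinite ``interaction'' of the two indefinite road Hessians forces the strict equality $A=B$; only when this is combined with the tie do both asymmetries collapse to $1$. The remaining care is in confirming the elementary sign facts ($\scaleparam_\idxroad>0$ and strictly positive totals on shared roads) that license the final cancellation.
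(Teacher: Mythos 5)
Your proof is correct, and its skeleton is the same as the paper's: restrict attention to demand-preserving perturbations on the two putatively shared roads (your $\Phi(x,y)$ is exactly the paper's two-road cost $J$ with the totals fixed), invoke interior optimality, and compute the constant Hessian $H_1+H_2$ whose determinant is $-\left(\scaleparam_1(\asymmetry_1-1)+\scaleparam_2(\asymmetry_2-1)\right)^2$. The genuine difference is your handling of the degenerate case, and here your version is actually tighter than the published one. The paper asserts the determinant ``is negative if not both $\asymmetry_i$ and $\asymmetry_j$ equal one'' and concludes the Hessian is indefinite; this overlooks the possibility $\scaleparam_1(\asymmetry_1-1)+\scaleparam_2(\asymmetry_2-1)=0$ with $\asymmetry_1\neq 1\neq \asymmetry_2$ (possible when $\asymmetry_1>1>\asymmetry_2$ with compensating magnitudes), in which case the determinant vanishes, $H_1+H_2$ is positive semidefinite of rank one, and no second-order contradiction arises. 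Since the paper explicitly remarks that the assumption $\capaut_\idxroad/\caphum_\idxroad\ge 1$ is \emph{not} required for this theorem, that degenerate case is genuinely in scope, so the published argument has a small lacuna exactly where you supply the first-order tie condition $\scaleparam_1(\asymmetry_1-1)(\flowhum_1+\flowaut_1)=\scaleparam_2(\asymmetry_2-1)(\flowhum_2+\flowaut_2)$: combined with $A=B$ and strict positivity of the totals on shared roads, it forces $\asymmetry_1=\asymmetry_2=1$, contradicting the hypothesis. (Under the physically natural restriction $\asymmetry_\idxroad\ge 1$ the cancellation cannot occur and the two proofs coincide.) One minor point: you invoke $\scaleparam_\idxroad>0$, which the theorem statement does not formally assume (only Theorem~\ref{thm:tolls} does); it is implicit in the BPR derivation and is equally indispensable to the paper's own argument, since with $\scaleparam_1=\scaleparam_2=0$ and equal free-flow latencies every routing is optimal and the conclusion fails outright.
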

\begin{proof}
	We prove this by contradiction. Assume that an optimal routing $\flowopt$ has positive human-driven and autonomous flow on both roads $i$ and $j$ $\in \setroads$, \emph{i.e.} $\flowhumopt_i>0,\flowautopt_i>0$ and $\flowhumopt_j>0,\flowautopt_j>0$, where $i \neq j$. We fix the sum of the flow of each type on those two roads, denoting $\demandhum_{ij} = \flowhumopt_i + \flowhumopt_j$ and $\demandaut_{ij} = \flowautopt_i + \flowautopt_j$. To show the contradiction, it is sufficient to show that there is a routing with lower social cost, with the same fixed total flow on the two roads.
	
	The cost of the flow on these two roads, as a function of $\flowhum_i$ and $\flowaut_i$ and parameterized by the demands, is as follows. 
	\begin{align*}
	&J(\flowhum_i,\flowaut_i,\demandhum_{ij},\demandaut_{ij}) = (\asymmetry_i \scaleparam_i \flowhum_i + \scaleparam_i \flowaut_i + \fflat_i)(\flowhum_i + \flowaut_i) \; + \\
	& \; (\asymmetry_j \scaleparam_j (\demandhum_{ij}-\flowhum_i) + \scaleparam_j (\demandaut_{ij} - \flowaut_i) + \fflat_j)(\demandhum_{ij} - \flowhum_i + \demandaut_{ij} - \flowaut_i) \: .
	\end{align*}
	The fact that $0 < \flowhumopt_i < \demandhum_{ij}$ and $0 < \flowautopt_i < \demandaut_{ij}$ implies that there is a minimum in the feasible set, implying that the Hessian of the cost function with respect to $\flowhumopt_i$ and $\flowautopt_i$ has positive eigenvalues at some point in the feasible set. However, the Hessian is as follows:
	\begin{equation*}
	\begin{bmatrix}
	2 \scaleparam_i \asymmetry_i + 2 \scaleparam_j \asymmetry_j & (\asymmetry_i+1)\scaleparam_i + (\asymmetry_j + 1)\scaleparam_j \\
	(\asymmetry_i+1)\scaleparam_i + (\asymmetry_j + 1)\scaleparam_j & 2 \scaleparam_i + 2 \scaleparam_j
	\end{bmatrix} \; ,
	\end{equation*}
	which has determinant $-(\scaleparam_i(\asymmetry_i-1) + \scaleparam_j(\asymmetry_j-1))^2$, which is negative if not both $\asymmetry_i$ and $\asymmetry_j$ equal one. This matrix has both positive and negative eigenvalues, implying that no local minimum exists that is not on the boundary, contradicting the premise. Therefore, no two roads can have both positive human-driven flow and autonomous flow.
\end{proof}

Note that we haven't shown that the optimal solution is unique, rather we've established a property of any optimal solution. To assist with our analysis, for a given optimal routing $\flowopt$ we denote the set of roads without autonomous vehicles as $\sethumroads$ and the set of roads without human-driven vehicles as $\setautroads$. Some of the roads may have no flow in $\flowopt$; these roads will be in both sets.

We've shown that to minimize social cost we want to separate the human-driven and autonomous flow as much as possible. By exploiting this property, we will derive an optimal tolling scheme. Conceptually, by using this property and controlling which roads each vehicle type can use, we can rule out the possible existence of suboptimal equilibria. Since we can use differentiated tolls, we influence which roads each vehicle type chooses by leveraging large tolls to keep them off of roads on which they don't travel in the chosen optimal routing. We use tolls of the form of Dafermos' path tolls \cite{dafermos1972multiclass_user} for the remaining roads. We formalize this in the following theorem.

\begin{theorem}\label{thm:tolls}
	Consider a network of parallel roads with latency functions of the form \eqref{eq:latency} and assume that $\asymmetry_\idxroad=1$ for at most one road. Further, assume that latency is strictly increasing with vehicle flow, \emph{i.e.} $\asymmetry_\idxroad > 0$ and $\scaleparam_\idxroad > 0$ $\forall \idxroad \in \setroads$. Solve for a socially optimal routing $\flowopt$, and use $\sethumroads$ to denote the set of roads without autonomous vehicles and use $\setautroads$ to denote the set of roads without human-driven vehicles in $\flowopt$. Then levy the following tolls. Let
		\begin{equation}\label{eq:tolls}
			\begin{aligned}
			& \tollhum_\idxroad =
			& & \largenum 
			& & \text{if} \; \idxroad \in \setautroads \\
			& 
			& & \mu- \latency_\idxroad(\flowhumopt_\idxroad,\flowautopt_\idxroad)
			& & \text{otherwise} \\
			& \tollaut_\idxroad = 
			& & \largenum
			& & \text{if} \; \idxroad \in \sethumroads \\
			&
			& & \mu-\latency_\idxroad(\flowhumopt_\idxroad,\flowautopt_\idxroad)
			& & \text{otherwise} \; ,
			\end{aligned}
		\end{equation}
		for arbitrary $\mu$ and sufficiently large $\largenum$. Then, all resulting equilibria will have the same social cost, which is equal to that of the socially optimal routing $\flowopt$.
\end{theorem}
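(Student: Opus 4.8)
The plan is to prove the stronger statement that $\flowopt$ is the \emph{unique} Wardrop equilibrium under the tolls in \eqref{eq:tolls}; equality of social cost is then immediate. I would proceed in three stages: (i) verify that $\flowopt$ is an equilibrium; (ii) show that for $\largenum$ large enough every equilibrium keeps each vehicle type off the roads it does not use in $\flowopt$, i.e. human flow stays on $\setroads\setminus\setautroads$ and autonomous flow on $\setroads\setminus\sethumroads$; and (iii) on this reduced support, invoke Theorem~\ref{thm:separation} --- at most one shared road --- to collapse the equilibrium conditions to a single scalar equation whose only root reproduces $\flowopt$.

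Stage (i) is a direct check: on every road carrying positive human flow in $\flowopt$ the toll is $\mu - \latency_\idxroad(\flowhumopt_\idxroad,\flowautopt_\idxroad)$, so the experienced human cost at $\flowopt$ equalizes to $\mu$, and symmetrically for autonomous vehicles, while every remaining road carries the toll $\largenum$. Thus at $\flowopt$ each type sees cost $\mu$ on each road it uses and cost at least $\largenum$ elsewhere, so no user can improve. For stage (ii), since total demand is fixed the affine latencies \eqref{eq:latency} are bounded above on the feasible set, and the tolls $\mu - \latency_\idxroad(\flowopt)$ are fixed constants; hence the experienced cost of using an \emph{allowed} road is bounded by a finite constant independent of $\largenum$, whereas using a road with toll $\largenum$ costs at least $\largenum$. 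Choosing $\largenum$ above that finite bound makes every forbidden road strictly more expensive than some used road, contradicting the no-incentive-to-deviate condition and pinning the support as claimed.

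Stage (iii) is where I expect the real difficulty. On the reduced support the two commodities interact only on the single shared road $s$ permitted by Theorem~\ref{thm:separation}, and there the coupled cost map is \emph{not} monotone when $\asymmetry_s\neq 1$ (its Jacobian is indefinite, paralleling the indefinite Hessian in the proof of Theorem~\ref{thm:separation}), so a standard monotone variational-inequality argument does not apply. The key fact I would exploit is that on $s$ the two tolls coincide, $\tollhum_s=\tollaut_s$, so both types experience the same cost there; in any equilibrium in which $s$ carries both types this forces the human and autonomous equilibrium costs to a common value, which I write as $\mu+\delta$. Using strict monotonicity of each single-type latency ($\asymmetry_\idxroad\scaleparam_\idxroad>0$, $\scaleparam_\idxroad>0$), every non-shared road's equilibrium flow is then a nondecreasing function of $\delta$ equal to its optimal value at $\delta=0$; the two demand constraints express $\flowhum_s$ and $\flowaut_s$ through $\delta$, and substituting into the shared-road cost equation yields an identity of the form $\delta = -(\text{nondecreasing in }\delta\text{, vanishing at }0)$. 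Since the left side strictly increases and the right side is nonincreasing, $\delta=0$ is the unique root, forcing the equilibrium to equal $\flowopt$.

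Finally I would rule out the boundary configurations in which $s$ loses one or both vehicle types in equilibrium: redistributing a type entirely onto its single-type roads forces the corresponding multiplier strictly above $\mu$ through the demand constraint, while the shared-road cost simultaneously drops strictly below $\mu$, violating the equilibrium inequality on $s$. Existence of an equilibrium follows from the usual continuity-and-compactness argument, so $\flowopt$ is the unique equilibrium and every equilibrium attains its social cost. The crux is stage (iii): converting the loss of monotonicity on the lone mixed road into a tractable scalar fixed-point problem by using the equality of the two tolls there.
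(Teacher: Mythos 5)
Your proposal is correct, and its stages (i) and (ii) coincide with the paper's proof: the paper likewise verifies that $\flowopt$ equilibrates at common experienced cost $\mu$ by construction of the tolls, and uses sufficiently large $\largenum$ to confine human flow to $\setroads\setminus\setautroads$ and autonomous flow to $\setroads\setminus\sethumroads$. Your stage (iii), however, is a genuinely different uniqueness mechanism. The paper argues by contradiction directly on the mixed-road flows: supposing a second equilibrium $\alteqflow$ with, say, $\alteqhumflow_\idxmixedrd > \flowhumopt_\idxmixedrd$, it chains strict monotonicity of the aggregate single-type cost functions $\eqcosthum,\eqcostaut$ with the equal-toll identity $\costhum_\idxmixedrd(\flowhum_\idxmixedrd,\flowaut_\idxmixedrd)=\costaut_\idxmixedrd(\flowhum_\idxmixedrd,\flowaut_\idxmixedrd)$ to violate the Wardrop conditions in each sign case; once the mixed-road flow is pinned to $(\flowhumopt_\idxmixedrd,\flowautopt_\idxmixedrd)$, it splits the network in two and cites Dafermos (Proposition 3.2) for essential uniqueness of each single-type subnetwork. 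You instead parametrize every candidate equilibrium by the common cost level $\mu+\delta$, observe that each allowed single-type road carries flow $\max\bigl\{0,\,(\mu+\delta-\fflat_\idxroad-\tollhum_\idxroad)/(\asymmetry_\idxroad\scaleparam_\idxroad)\bigr\}$ (nondecreasing in $\delta$ and equal to $\flowhumopt_\idxroad$ at $\delta=0$ by the toll construction), and collapse the shared-road condition to a strictly monotone scalar equation whose unique root is $\delta=0$. Your route buys full uniqueness of the equilibrium flows rather than the paper's essential uniqueness, avoids the appeal to Dafermos, and treats explicitly the boundary configurations in which a type abandons the shared road --- a case the paper dispatches with only ``the same analysis can be done'' (and your contradiction there checks out: the abandoned type's common cost rises strictly above $\mu$ while the shared-road cost falls strictly below $\mu$, precisely because $\flowhumopt_\idxmixedrd,\flowautopt_\idxmixedrd>0$ on a genuinely mixed road). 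The paper's route buys brevity and reuse of a classical result. Two small points to state in a final write-up: handle the degenerate case where $\flowopt$ has no mixed road (your machinery then yields two decoupled monotone scalar equations, one per type), and note that every road tolled $\mu-\latency_\idxroad(\flowhumopt_\idxroad,\flowautopt_\idxroad)$ for a given type carries strictly positive flow of that type in $\flowopt$ (zero-flow roads lie in both $\sethumroads$ and $\setautroads$ and receive $\largenum$ for both types), which is what makes your aggregate functions strictly increasing at $\delta=0$ and the root argument airtight.
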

\begin{proof}
	First, Theorem~\ref{thm:separation} guarantees that a socially optimal routing will have at most one mixed road. Now we must prove 1) that $\flowopt$ is an equilibrium with the new tolls and 2) that any equilibrium will have the same social cost as $\flowopt$. The first follows directly from the construction of the toll and the definition of an equilibrium. When following routing $\flowopt$, all users will experience cost $\mu$ and any other option would have cost at least $\mu$, therefore satisfying conditions for equilibrium.
	
	Now we must prove essential uniqueness for this equilibrium, meaning that any other equilibrium will have the same social cost as $\flowopt$. First note that for $\largenum$ sufficiently large, we will never have an equilibrium with human-driven flow on roads in $\setautroads$ and similarly, there will never be autonomous vehicles on roads $\sethumroads$. Now, first consider the case in which there is no mixed road. In this case we can split the network in two and consider each part separately -- Dafermos \cite{dafermos1972multiclass_user} (Proposition 3.2) ensures essential uniqueness on each part.
	
	Next, consider the event that a mixed road exists, and use $\idxmixedrd$ to denote its index. Now if we can fix the flow on $\idxmixedrd$ to be $(\flowhumopt_\idxmixedrd, \flowautopt_\idxmixedrd)$, we can again split the network in two and consider each separately. But can we guarantee that road $\idxmixedrd$ will have flow $(\flowhumopt_\idxmixedrd, \flowautopt_\idxmixedrd)$?
	
	Let us denote the total human-driven and autonomous vehicle flow as $\demandhum$ and $\demandaut$, respectively. We split the flow of each type into two: the flow on the mixed road, $\flowhum_\idxmixedrd$ and $\flowaut_\idxmixedrd$, and the remaining flow, $\demandhum-\flowhum_\idxmixedrd$ and $\demandaut-\flowaut_\idxmixedrd$. There exists a large enough $\largenum$ such that $\flowaut_\idxroad = 0$ $\forall \idxroad \in \sethumroads$ and $\flowhum_\idxroad=0$ $\forall \idxroad \in \setautroads$. As established in Section~\ref{sct:model}, all users of the same type will experience the same cost. Accordingly, all users on the roads $\sethumroads$ will experience the same cost which is increasing with respect to the flow demand -- we use $\eqcosthum:\mathbb{R}_{\ge 0} \rightarrow \mathbb{R}_{\ge 0}$ to denote the cost experienced by the users of roads $\sethumroads$ as a function of the total flow on those roads, and similarly with $\eqcostaut:\mathbb{R}_{\ge 0} \rightarrow \mathbb{R}_{\ge 0}$ for the users of roads $\setautroads$. Both of these functions will be strictly increasing in their arguments as a result of our assumption that $\asymmetry_\idxroad > 0$ and $\scaleparam_\idxroad > 0$ $\forall \idxroad \in \setroads$. Similarly, the cost on the mixed road is increasing in both arguments.
	
	Further, note that the experienced cost of the human drivers and autonomous users on the mixed road will be the same on the mixed road, since they have identical tolls on that road. Formally,
	\begin{align}\label{eq:equal_cost_mixed}
		\costhum_\idxmixedrd(\flowhum_\idxmixedrd,\flowaut_\idxmixedrd) = \costaut_\idxmixedrd(\flowhum_\idxmixedrd,\flowaut_\idxmixedrd) \; .
	\end{align}
	Finally, note that that if there are human drivers on the mixed road, then $\costhum_\idxmixedrd(\flowhum_\idxmixedrd,\flowaut_\idxmixedrd) = \eqcosthum(\demandhum - \flowhum_\idxmixedrd)$ and similarly for autonomous vehicles. 
	
	Now, consider for the purpose of contradiction that there exists a second equilibrium $\alteqflow$ with greater social cost, \emph{i.e.} $\socialcost(\alteqhumflow,\alteqautflow)>\socialcost(\flowhumopt,\flowautopt)$. Due to the properties of essential uniqueness discussed above, to have a different social cost the new equilibrium must have different flow on the mixed road. We therefore first consider the case that $\alteqhumflow_\idxmixedrd > \flowhumopt_\idxmixedrd$. If $\alteqautflow_\idxmixedrd \ge \flowautopt_\idxmixedrd$ there is an immediate contradiction, as
	\begin{align*}
		\eqcosthum(\demandhum - \alteqhumflow_\idxmixedrd) < \eqcosthum(\demandhum - \flowhumopt_\idxmixedrd) = \costhum_\idxmixedrd(\flowhumopt_\idxmixedrd, \flowautopt_\idxmixedrd) < \costhum_\idxmixedrd(\alteqhumflow_\idxmixedrd, \alteqautflow_\idxmixedrd)
	\end{align*}
	violating the equilibrium conditions for the human-driven vehicles, contradicting the premise. If instead $\alteqhumflow_\idxmixedrd > \flowhumopt_\idxmixedrd$ and $\alteqautflow_\idxmixedrd < \flowautopt_\idxmixedrd$,
	\begin{align*}
		\eqcosthum(\demandhum - \alteqhumflow_\idxmixedrd) &< \eqcosthum(\demandhum - \flowhumopt_\idxmixedrd) = \costhum_\idxmixedrd(\flowhumopt_\idxmixedrd,\flowautopt_\idxmixedrd) = \costaut_\idxmixedrd(\flowhumopt_\idxmixedrd,\flowautopt_\idxmixedrd) \\
		& = \eqcostaut(\demandaut - \flowautopt_\idxmixedrd) < \eqcostaut(\demandaut - \flowautopt_\idxmixedrd) \; ,
	\end{align*}
	and the equilibrium conditions with \eqref{eq:equal_cost_mixed} yield
	\begin{align*}
		\eqcosthum(\demandhum - \alteqhumflow_\idxmixedrd) = \costhum_\idxmixedrd(\alteqhumflow_\idxmixedrd,\alteqautflow_\idxmixedrd) = \costaut_\idxmixedrd(\alteqhumflow_\idxmixedrd,\alteqautflow_\idxmixedrd) \; .
	\end{align*}
	Together this implies that $\costaut_\idxmixedrd(\alteqhumflow_\idxmixedrd,\alteqautflow_\idxmixedrd)< \eqcostaut(\demandaut - \flowautopt_\idxmixedrd)$ violating the conditions for equilibrium for $\alteqflow$. The same analysis can be done with switching autonomous and human-driven vehicles. The contradiction then proves that all equilibria will have the same flow on the mixed road, completing the proof.
\end{proof}

\begin{remark}
	We have a degree of freedom in choosing $\mu$. One would likely choose $\mu$ to be greater than the maximum latency in $\flowopt$ so as to avoid paying anyone to travel on a road.
\end{remark}
\begin{remark}
	We could relax the assumption that $\asymmetry_\idxroad=1$ for at most one road, since it would only matter if those roads were the mixed roads. If they are, then we have essential uniqueness there as well and a more complicated version of the proof would hold.
\end{remark}

\emph{Example. } We provide the following example to demonstrate the benefit of the tolling scheme described above. Consider the heterogeneous network in Fig.~\ref{fig:example_tolling}, which has three roads with varying characteristics. One can consider the top road to be a highway, on which autonomous vehicle platooning offers significant benefits, the middle road to be an urban road, and the bottom road to be a road in a residential community with many bike paths and pedestrian crossings. Let the human-driven flow demand be $\demandhum = 2.625$ and autonomous flow demand be $\demandaut = 2.5$.

\begin{figure}
	\centering
	\begin{tikzpicture}[->, >=stealth', auto, semithick, node distance=8cm]
	\tikzstyle{every state}=[fill=white,draw=black,thick,text=black,scale=0.8]
	\node[state]    (0)               {$s$};
	\node[state]    (1)[right of=0]   {$t$};
	\path
	(0) edge[bend left=50]		node{$\latency_1(\flowhum_1, \flowaut_1) = 4 \flowhum_1 + \flowaut_1 + 0.5$}     (1)
	(0) edge				node[above]{$\latency_2(\flowhum_2,\flowaut_2) = 2\flowhum_2 + \flowaut_2 + 1$}     (1)
	(0) edge[bend right=50]	node[below]{$\latency_3(\flowhum_3,\flowaut_3) = \flowhum_3 + 3\flowaut_3 + 0.5$}     (1);
	\end{tikzpicture}
	\caption{An example of a network that benefits from the tolling scheme described in this paper. Consider human-driven flow demand $\demandhum = 2.625$ and autonomous flow demand $\demandaut = 2.5$.}
	\label{fig:example_tolling}
\end{figure}
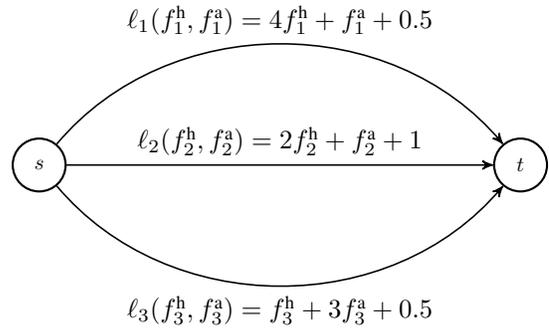

We list the numerical values for the worst-case equilibrium routing, the optimal routing, and the tolls that yield optimal routing in Table~\ref{table:example}. We choose our free variable for tolling to be $\mu=3$ to keep the tolls relatively low. In summary, in the worst-equilibrium routing, road $1$ has only human-driven vehicles and road $3$ has only autonomous vehicles; in the optimal routing this is reversed. As expected from Thm.~\ref{thm:separation}, in the optimal routing there is only one mixed road, which is road $2$. In the worst-case equilibrium all roads have positive flow and therefore have the same latency, which is $5$; the social cost is then $25.625$. In the optimal routing, which is enforced by the tolls provided, road latency varies but all latencies are well below $5$, and the social cost is $12.92$ -- an improvement by approximately a factor of $2$.

\begin{table}
	\caption{Routings, Latency, and Tolls for the Example Network}
	\label{table:example}
	\centering
	\begin{tabular}{lllllllll}
		\toprule
		 & \multicolumn{3}{c}{Worst eq. routing,} & \multicolumn{3}{c}{Opt. routing,} & \multicolumn{2}{c}{Opt. Tolls}  \\
		& \multicolumn{3}{c}{social cost: $25.625$ } & \multicolumn{3}{c}{social cost: $12.92$} &  \\
		\cmidrule(r){2-4}
		\cmidrule(r){5-7}
		\cmidrule(r){8-9}
		Road  & Human & Aut. & $\latency$ & Human & Aut. & $\latency$ & $\tollhum$ & $\tollaut$ \\
		\midrule
		1 & $1.125$ & $0$ & 5 & 0 & 1.65 & 1.67 & $\largenum$ & 1.33 \\
		2 & $1.5$ & $1$ & 5 & 0.37 & 0.85 & 2.58 & 0.42 & 0.42 \\
		3 & $0$ & $1.5$ & 5 & 2.26 & 0 & 2.76 & 0.24 & $ \largenum$\\
		\bottomrule
	\end{tabular}
\end{table}

\section{CONCLUSION AND FUTURE WORK}
\label{sct:conclusion}
In this paper we investigated tolling for roads with a mixture of human-driven and autonomous vehicles. We showed that if human drivers and autonomous vehicles are given the same tolls, the resulting equilibrium may have unboundedly worse total delay than the best-case routing. Allowing ourselves to toll human drivers and autonomous vehicles differently, we first established theoretical properties of the optimal routing of this mixed traffic on parallel roads, then used these results to find an optimal tolling scheme. 

There is room for expanding these results, specifically in the following directions.
\begin{itemize}
	\item This work deals with affine latency functions; the BPR model used generalizes easily to higher-order polynomials. It would be worthwhile to investigate tolling with these higher-order functions.
	\item This work could be expanded by analyzing more general network models, including those with multiple populations, each with its own source-destination pair.
	\item One can consider the Stackelberg case, assuming the routing for autonomous vehicles can be directly controlled. It may be fruitful to develop a unified Stackelberg (for autonomous vehicles) and tolling (for human-driven vehicles) scheme.
\end{itemize}
These would result in further steps towards ensuring the efficient operation of traffic networks with the emergence of autonomous vehicles.

\section{ACKNOWLEDGMENTS}
\label{sct:acknowledgments}
This work was supported by NSF grants CCF-1755808 and 1736582 and UC Office of the President grant LFR-18-548175. The authors also thank Mohit Srinivasan for many productive discussions.


\bibliographystyle{ieeetr}
\bibliography{refs.bib}

\end{document}